\documentclass[12pt]{article}

\usepackage{amsfonts}
\usepackage{mathrsfs}
\usepackage{amsfonts}
\usepackage{amsfonts}
\usepackage{amsfonts}
\usepackage{amsfonts}
\usepackage{amsfonts}
\usepackage{amsfonts}
\usepackage{amsfonts}
\usepackage{amsfonts}
\usepackage{amsfonts}
\usepackage{amsfonts}
\usepackage{amsfonts}
\usepackage{amsfonts}
\usepackage{amsfonts}
\usepackage{amsfonts}
\usepackage{amscd}
\usepackage{bbm}
\usepackage [dvips]{graphics}
\usepackage[centertags]{amsmath}
\usepackage{amsfonts}
\usepackage{amssymb}
 
\usepackage{amsthm}
\usepackage{newlfont}
\usepackage{stmaryrd}
\usepackage[]{titletoc}  
\usepackage{mathrsfs}
\usepackage{stmaryrd}
\usepackage [dvips]{graphics}
\usepackage[centertags]{amsmath}
\usepackage{amsfonts}
\usepackage{amssymb}
\usepackage{amsthm}
\usepackage{newlfont}
\usepackage{hyperref}

\hfuzz2pt 
\newlength{\defbaselineskip}
\setlength{\defbaselineskip}{\baselineskip}
\newcommand{\setlinespacing}[1]%
           {\setlength{\baselineskip}{#1 \defbaselineskip}}

\theoremstyle{plain}
\newtheorem{thm}{Theorem}[section]

\newtheorem{lem}[thm]{Lemma}

\newtheorem{prop}[thm]{Proposition}

\newtheorem{rem}[thm]{Remark}
\newtheorem{Def}[thm]{Definition}

\addtolength{\textwidth}{0.2 \textwidth}
\addtolength{\topmargin}{-0.1 \textheight}
\addtolength{\textheight}{0.2 \textheight}
\newcommand{\cc}{\mathbb{C}^m}
\newcommand{\ccc}{\mathbb{C}^n}

\newcommand{\et}{E(\mathbf{T})}

\newcommand{\w}{\widetilde}

\newcommand{\p}{\partial}
\newcommand{\bnn}{\mathcal {B}_n^m(\Omega)}

\makeatletter\@addtoreset{equation}{section} \makeatother
\setcounter{page}{1}
\begin{document}

\textwidth=13.5cm
  \textheight=23cm
  \hoffset=-1cm

  \baselineskip=17pt
\title {Specht's invariant and  localization of operator tuples}
\author{Li Chen}
\date{}
\maketitle {\textbf{}
\def\zz{\textbf{z}}
\def\ww{\textbf{w}}
\def\zzz{\textbf{z}_0}
\def\bg{\mbox{\boldmath$\gamma$}}
\def\ba{\mbox{\boldmath$a$}}
\def\bb{\mbox{\boldmath$b$}}
\def\bet{\mbox{\boldmath$\beta$}}
\def\M{\mathcal{M}}
%
%
%
%
%
%
%
%
%

\textbf{Abstract:} We  introduce   a  set  of operator-valued   invariants  for   localization of  \linebreak operator  tuples in the Cowen-Douglas class, with which   a Specht-type  criterion for  unitary equivalence  is obtained.

\emph{Key words}:  Specht's invariant; localization; Cowen-Douglas operator
\section{Introduction}

Finding suitable  invariants to classify  non-normal Hilbert space operators  up to unitary equivalence
  is    in general  a widely open and appealing topic. A basic   existence  theorem of scaler-valued  invariants for finite matrices  was given by Specht      in terms of matrix  traces:
  \begin{thm}\label{spt}(Specht \cite{Spe}) Two complex $n\times n$ matrices $A$ and $B$ are unitarily equivalent if and only if
$tr(w(A, A^*)) = tr(w(B, B^*))$  for all  words $w$ in two variables.\end{thm}

On infinite dimensional Hilbert spaces where scaler invariants are not always obtainable, a natural idea  is to consider  operator-valued alternatives.  By  operator-valued invariants  we mean that one      associates    to  every operator  $T$(in a given operator class)      a  set $\mathcal{I}(T)$ consisting  of ``testing operators" acting on certain ``testing spaces",    such that  unitary equivalence of $T$    can be reduced to   unitary equivalences of  operators in  $\mathcal{I}(T)$.
A   well-known   example \cite{NF} is the   complete non-unitary  contractions, whose unitary equivalence can be determined by the so-called  characteristic function  as a family of  testing operators     parameterized by points in the unit   disc acting on the  defect spaces of the contractions.

 \begin{rem}\label{test} Operator-valued invariants  always exist(one can trivially set $\mathcal{I}(T)=\{T\}$ as a singleton with  entire space as the testing space),  and   conceptual  non-triviality lies in     the existence of    small and canonical testing spaces
  (in particular,     existence of  scaler  invariants follows from existence of one dimensional  testing spaces). Minimizing  the cardinality of $\mathcal{I}(T)$, after    testing spaces  identified and fixed,  is a      technical problem (see, for instance, a series of refinements  \cite{Pear,Pap,DJJ}  on  Theorem \ref{spt})  and will not be the theme of  this paper.\end{rem}

%
In this paper we work on multi-variate case with  a tuple of commuting operators \linebreak lying in the following  important and extensively studied  class    introduced  by M. Cowen and R.Douglas \cite{CD,CD2}.

   \begin{Def}\label{bn}Given  positive integers $m,n$, and a bounded domain $\Omega$ in $\cc$, a commuting $m$-tuple of operators $\textbf{T}=(T_1,\cdots,T_m)$ acting on a Hilbert space $\mathcal {H}$ belongs to the   class    $\mathcal {B}_n^m(\Omega) $ if the followings  hold:

 $(i)$ The   space $\{((T_1-z_1)h,\cdots, (T_m-z_m)h), h\in \mathcal {H}\}$  is a closed subspace in  $\mathcal {H}\oplus\cdots\oplus \mathcal {H}$($m$ copies of $\mathcal {H}$) for every $\zz=(z_1,\cdots,z_m)\in\Omega$;

 $(ii)$ $\dim\cap_{i=1}^m \ker(T_i-{z_i})=n$ for every   $\zz\in\Omega$ and

  $(iii)$ $ \vee_{\zz\in\Omega}\cap_{i=1}^m \ker(T_i-{z_i})=\mathcal {H}$(here $\vee$ denotes the closed linear span).
\end{Def}

  %
 Operator tuples in $\mathcal {B}_n^m(\Omega) $ can be modeled by   adjoints of coordinate multiplications on $\ccc$-valued    holomorphic function spaces  in $m$ complex variables \cite{CS} such as Hardy or Bergman   spaces, making it a rich class of considerable interest.
We record the following theorem  which is the main  result  of   Cowen and Douglas' seminal work, asserting that unitary equivalence of operator tuples  in $\mathcal {B}_n^m(\Omega) $ can be tested on    a family of   finite dimensional   spaces.


    \begin{thm}\label{cdmain}(\cite{CD1,CD})Operator tuples $\mathbf{T} $ and $\mathbf{\w T}$ in $\mathcal {B}_n^m(\Omega) $  are unitarily equivalent if and only if $\mathbf{T}|_{H_\zz^{n+1}}$ is unitarily equivalent to $\mathbf{\w T}|_{\w H_\zz^{n+1}}$ for every $\zz$ in $\Omega$.\end{thm}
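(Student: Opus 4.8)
My plan is to prove the two implications separately, routing the hard direction through the Hermitian holomorphic vector bundle attached to a Cowen--Douglas tuple and the bundle-theoretic formulation of unitary equivalence from \cite{CD,CD1}. The necessity half is immediate: if $U:\mathcal H\to\w{\mathcal H}$ is unitary with $UT_i=\w T_iU$ for all $i$, then $U$ carries $\bigcap_i\ker(T_i-z_i)^k$ onto $\bigcap_i\ker(\w T_i-z_i)^k$ for every $k$, hence maps $H_\zz^{n+1}$ onto $\w H_\zz^{n+1}$; the restriction $U|_{H_\zz^{n+1}}$ is then a unitary intertwiner of $\mathbf T|_{H_\zz^{n+1}}$ and $\mathbf{\w T}|_{\w H_\zz^{n+1}}$ at every $\zz\in\Omega$.

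For sufficiency I would first set up the geometric dictionary. Writing $E_{\mathbf T}(\zz)=\bigcap_i\ker(T_i-z_i)$, conditions $(i)$--$(iii)$ of Definition \ref{bn} make $E_{\mathbf T}$ a rank-$n$ Hermitian holomorphic bundle over $\Omega$; fixing a base point $\zzz$ and a holomorphic frame $\g_1,\dots,\g_n$ near it, the unitary data of the tuple are encoded in the metric $h_{ij}(\zz)=\langle\g_j(\zz),\g_i(\zz)\rangle$ and ultimately in the curvature $\kkkk$ together with its covariant derivatives. The key observation is that $H_{\zzz}^{n+1}$ is spanned by the frame derivatives up to order $n$ (the $n+1$ levels $0,1,\dots,n$), so that the Gram matrix computing $\mathbf T|_{H_{\zzz}^{n+1}}$ up to unitary equivalence records exactly the mixed jet $\{\p^\alpha\bar\p^\beta h(\zzz)\}$ of the metric to the corresponding order. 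Thus a local unitary equivalence at $\zzz$ is the same as equality of these metric jets, which by the Cauchy--Riemann relations is in turn equivalent to agreement of $\kkkk$ and its covariant derivatives at $\zzz$ to the order needed to determine the germ of a rank-$n$ bundle.

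Granting this dictionary, the converse runs as follows. Local unitary equivalence at every $\zz$ gives $\kkkk_{\mathbf T}(\zz)=\kkkk_{\mathbf{\w T}}(\zz)$ together with all the relevant covariant derivatives, for all $\zz\in\Omega$. By the rigidity theorem for Hermitian holomorphic bundles (Calabi's theorem, in the form used by Cowen and Douglas), two bundles sharing the same curvature and curvature derivatives are isometrically holomorphically isomorphic; invoking the bundle-theoretic equivalence criterion then upgrades this to the global unitary equivalence $\mathbf T\cong\mathbf{\w T}$.

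I expect the main obstacle to be the passage from the pointwise equivalences to a single global bundle isomorphism. The local unitary at each point is determined only up to the stabilizer of the jet data, so one cannot simply patch the maps together; instead one must show they can be selected to depend holomorphically on $\zz$. This is precisely where rigidity does the work: once a frame is normalized at $\zzz$ and the curvature data are fixed throughout $\Omega$, the holomorphic isometry is unique up to a single constant unitary, so the a priori pointwise choices are forced to cohere into a global isometric bundle map. A secondary technical point is to confirm that order $n+1$, i.e.\ frame derivatives up to order $n$, already captures enough of the jet of $h$ to pin down the rank-$n$ bundle germ; this finiteness is what makes the localization effective and is verified by a dimension count on the generalized joint kernels.
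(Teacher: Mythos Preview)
The paper does not supply its own proof of Theorem~\ref{cdmain}; it is recorded as the main result of Cowen and Douglas with a citation to \cite{CD1,CD}, and only the strategy is sketched in the paragraph following the statement. Your outline---passing to the Hermitian holomorphic bundle $E(\mathbf T)$, identifying $H_\zz^{n+1}$ with the span of frame derivatives up to order $n$, translating local unitary equivalence into agreement of the jet of the metric, and then invoking the curvature/rigidity machinery to globalize---is exactly the geometric argument the paper attributes to \cite{CD}. So there is nothing in the paper to compare against beyond that description, with which your plan agrees; note that the paper's Theorem~\ref{po} (together with Lemma~\ref{up}) carries out in detail the step you call ``the key observation,'' namely that unitary equivalence of the $k$-th localizations is equivalent to agreement of $\partial^I\bar\partial^J H$ at $\zz$ for $|I|,|J|\le k-1$.
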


 Here the space

 $$H_\zz^k:= \cap_{|I|=k}\ker (\mathbf{T}-\zz)^I$$ is called  the  \emph{$k$-th order localization} of $\mathbf{T}$ at  $\zz=(z_1,\cdots,z_m)$, where $$(\mathbf{T}-\zz)^I:=(T_1-z_1)^{i_1}(T_2-z_2)^{i_2}\cdots(T_m-z_m)^{i_m},$$ $I=(i_1,\cdots,i_m)$ and $|I|=i_1+\cdots +i_m$.

  Theorem \ref{cdmain} as a  surprising result  is well-known for its purely  geometric proof(see Sec 3, \cite{CD}) via a holomorphic vector bundle $\et$ over $\Omega$ associated to $\mathbf{T}$, where the   fiber of $\et$  at $\zz$ is nothing but the first order localization
 $   H_\zz^1=\cap_{i=1}^m \ker(T_i-{z_i})$(see Proposition \ref{hooo} below).  Geometrically reducing  unitary equivalence  of    $\mathbf{T}|_{H_\zz^{n+1}}$  to   criterions  involving  the  curvature of $\et$ and its covariant derivatives   living on the fiber $H_\zz^1$ is not only a key step in the proof of Theorem \ref{cdmain}(see Sec 2, \cite{CD}), but also motivates numerous interesting later works, including the study of   flag structure and curvature inequalities of $\et$ that are of  independent geometric interest(\cite{CD2,flgjfa,MM1,MM2,MP,MR,WZ}), as well as  geometric classification theories for Hilbert modules(\cite{Chen,D,DM2,DMV,DP}).

  In this paper we   show that unitary equivalence of $H_\zz^k$  can be determined by    a    set of Specht-type operators (involving   words in  $T_1,\cdots,T_m$  and $T_1^*,\cdots,T_m^*$) acting on    $H_\zz^1$, which is an operator-theoretic counterpart of the above mentioned geometric reduction and  extends   results from recent works of  Misra, Pal and Reza(\cite{MP,MR}) as well.  In particular, as the  first order localization is much smaller than high order ones, letting $\zz$ run through $\Omega$  our result (applied to   localizations of order $n+1$)   implies  a     refinement of Theorem \ref{cdmain} in light of Remark \ref{test}.

\section{Prelimineries}

For a separable Hilbert space $\mathcal {H}$ and a positive integer
$n$, let $\mathcal {G}r(n,\mathcal {H})$ denote the Grassmann
manifold of all $n$-dimensional subspaces of $\mathcal {H}$.
 A map $E:
 \Omega\rightarrow \mathcal {G}r(n,\mathcal {H})$ is called a
 holomorphic curve if  there
 exists  $n$ holomorphic $\mathcal {H}$-valued functions $\gamma_1, \cdots,
 \gamma_n$, called a holomorphic frame,   such that $E(\zz)=span\{\gamma_1(\zz), \cdots,
 \gamma_n(\zz)\}$ for every $\zz\in\Omega$.  In particular, this defines a holomorphic Hermitian vector bundle of rank $n$ over $\Omega$ whose Hermitian metric   is  just the inner product of $\mathcal {H}$.  The following important observation of Cowen and Douglas(Proposition 1.11, \cite{CD})   asserts that the family of joint eigen-spaces of an operator tuple $\mathbf{T}\in\bnn$ is such an bundle.
\begin{prop}\label{hooo}
For any $\mathbf{T}\in\bnn$,  the map $ \zz\mapsto \cap_{i=1}^m \ker(T_i-{z_i})$ defines a holomorphic curve.\end{prop}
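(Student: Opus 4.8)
The plan is to prove the statement locally: since being a holomorphic curve is a local condition on $\Omega$, it suffices to produce a holomorphic frame in a neighborhood of an arbitrary fixed point $\zzz\in\Omega$. Throughout, write $D_\zz\colon\mathcal H\to\mathcal H\oplus\cdots\oplus\mathcal H$ for the operator $D_\zz h=((T_1-z_1)h,\cdots,(T_m-z_m)h)$, so that $H_\zz^1=\ker D_\zz$. Condition $(i)$ of Definition \ref{bn} says precisely that $D_\zz$ has closed range, and condition $(ii)$ says $\dim\ker D_\zz=n$ for every $\zz$. The strategy is to construct a holomorphic family of idempotents $P_\zz$ with $\mathrm{ran}\,P_\zz=\ker D_\zz$; then, fixing a basis $e_1,\cdots,e_n$ of $H_{\zzz}^1$, the sections $\gamma_j(\zz):=P_\zz e_j$ will be holomorphic, will lie in $\ker D_\zz$, and will satisfy $\gamma_j(\zzz)=e_j$, hence will be linearly independent for $\zz$ near $\zzz$ and will form the desired frame.

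To build $P_\zz$, set $V=H_{\zzz}^1=\ker D_{\zzz}$ and $W=V^\perp$, and consider $A_\zz:=D_\zz|_W$. Since $D_{\zzz}$ has closed range and kernel exactly $V$, the restriction $A_{\zzz}\colon W\to\mathcal H\oplus\cdots\oplus\mathcal H$ is bounded below; as being bounded below is stable under small perturbations and $\zz\mapsto A_\zz$ is (affine, hence) holomorphic, $A_\zz$ remains injective with closed range for $\zz$ near $\zzz$. In particular $\ker D_\zz\cap W=\ker A_\zz=0$; combining this with $\dim\ker D_\zz=n=\mathrm{codim}\,W$ forces the topological direct sum decomposition $\mathcal H=\ker D_\zz\oplus W$, and consequently $\mathrm{ran}\,D_\zz=D_\zz(W)=\mathrm{ran}\,A_\zz$. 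Let $S_0=A_{\zzz}^{-1}P_{R_0}$ be the bounded generalized inverse of $A_{\zzz}$, where $P_{R_0}$ is the orthogonal projection onto the closed subspace $R_0=\mathrm{ran}\,D_{\zzz}$; then $S_0A_{\zzz}=I_W$, so $S_0A_\zz=I_W+S_0(A_\zz-A_{\zzz})$ is invertible on $W$ near $\zzz$ by a Neumann series. One checks that
$$Q_\zz:=(S_0A_\zz)^{-1}S_0D_\zz$$
is exactly the idempotent projecting $\mathcal H$ onto $W$ along $\ker D_\zz$ (it kills $\ker D_\zz$ and fixes $W$), so that $P_\zz:=I-Q_\zz$ is the complementary idempotent onto $\ker D_\zz$. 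Since $D_\zz$ is holomorphic and $\zz\mapsto(S_0A_\zz)^{-1}$ is holomorphic (inverse of a holomorphic family of invertible operators), $P_\zz$ is holomorphic in $\zz$, which completes the construction.

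The point at which the argument genuinely uses the hypotheses — and the main obstacle compared with the single operator case — is the passage to the decomposition $\mathcal H=\ker D_\zz\oplus W$ with a \emph{fixed} complement $W$. For a single operator in the Cowen--Douglas class $D_\zz=T-z$ is surjective, so one may invert directly and even write $\gamma(\zz)=(I-(z-z_0)R)^{-1}e$ for a fixed right inverse $R$ of $T-z_0$; in the multivariable setting $\mathrm{ran}\,D_\zz$ is a proper, $\zz$-dependent subspace of $\mathcal H\oplus\cdots\oplus\mathcal H$, so no such global right inverse is available and one cannot naively solve $D_\zz\gamma(\zz)=0$ by a perturbation series. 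It is precisely the constancy of $\dim\ker D_\zz$ in $(ii)$, together with the closed range in $(i)$, that pins the kernel to a fixed complement and circumvents the moving range; verifying the stability of ``bounded below'' and the resulting direct sum decomposition is therefore the technical heart of the proof, while the holomorphy of $P_\zz$ is then a routine consequence of the holomorphic inverse function for operators. (Condition $(iii)$ plays no role here; it is needed only to ensure that $\et$ determines $\mathbf T$.)
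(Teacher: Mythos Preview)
The paper does not supply a proof of this proposition: it is quoted with attribution to Cowen and Douglas (Proposition~1.11 in \cite{CD}), with the multivariable formulation developed in \cite{CS}. Your argument is correct and self-contained, and it is essentially the standard one. The crucial step---showing that $\ker D_\zz$ remains transversal to the fixed complement $W=(\ker D_{\zzz})^\perp$ for $\zz$ near $\zzz$, by combining the stability of ``bounded below'' for $A_\zz$ with the constancy of $\dim\ker D_\zz$ from condition~(ii)---is exactly the device that yields a holomorphic family of idempotents and hence a holomorphic frame $\gamma_j(\zz)=P_\zz e_j$. Your remarks on why the single-variable right-inverse trick fails for $m>1$ (the range of $D_\zz$ moves with $\zz$) and on the irrelevance of condition~(iii) here are also accurate.
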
  We denote this holomorphic curve by  $E(\mathbf{T})$. In particular,   there exists holomorphic $\mathcal {H}$-valued functions
  $\gamma_1,\cdots,\gamma_n$ over $\Omega$ such that
 $\cap_{i=1}^m \ker(T_i-{z_i})=span\{\gamma_1(\zz),\cdots,\gamma_n(\zz)\}$. Now  we   further
represent the high order localization  $H_\zz^k= \cap_{|I|=k}\ker (\mathbf{T}-\zz)^I$  in terms of    derivatives of these functions.

For a fixed multi-index $I=(i_1,\cdots,i_m)$, set $I!=i_1!\cdots i_m!$ and  $\partial^I=\p^{i_1}_{ 1}\p^{i_2}_{2}\cdots\p^{i_m}_{m}$ where $\p_i$ denotes differentiation with respect to $z_i$, $1\leq i\leq m$. Given another multi-index $J=(j_1,\cdots,j_m)$, we say $I\geq J$ if $i_k\geq j_k$ for all $1\leq k\leq m$,  and the index $(i_1-j_1,\cdots,i_m-j_m)$ is denoted by $I-J$.

As $\cap_{i=1}^m \ker(T_i-{z_i})=span\{\gamma_1(\zz),\cdots,\gamma_n(\zz)\}$,   the identity $$(T_j-z_j)\gamma_i(\zz)=0$$   holds for   every $1\leq i,  j \leq m$, which, combined with standard differentiation computations  via the Leibnitz rule(or see p.470, \cite{CS}), yields

 \begin{equation}\label{diffe} (\mathbf{T}-\zz)^I\p^J\gamma_i(\mathbf{z}) =\left\{\begin{array}{l}{\frac{J!}{(J-I)!}\gamma_i(\mathbf{z}), J\geq I} \\ {0, \quad\quad \mathrm{otherwise}}\end{array}\right.\end{equation}

\begin{lem}\label{up}Given an operator tuple $\mathbf{T}=(T_1,\cdots, T_m)\in\bnn$ and a fixed holomorphic frame $\{\gamma_1,\cdots,\gamma_n\}$ for $\et$, it holds that for any positive integer $k$, $$\cap_{|I|=k}\ker (\mathbf{T}-\zz)^I=span_{|I|\leq k-1}\{\p^I\gamma_i(\mathbf{z}), 1\leq i\leq n\}$$\end{lem}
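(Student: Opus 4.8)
The plan is to prove the two inclusions separately, writing $V_k:=\mathrm{span}\{\p^I\gamma_i(\zz):|I|\le k-1,\ 1\le i\le n\}$ for the right-hand side and $H_\zz^k:=\cap_{|I|=k}\ker(\mathbf{T}-\zz)^I$ for the left-hand side, and letting $e_j$ denote the $j$-th coordinate multi-index. The inclusion $V_k\subseteq H_\zz^k$ is immediate from (\ref{diffe}): for $|J|\le k-1$ and $|I|=k$ one has $|J|<|I|$, so $J\not\ge I$ and hence $(\mathbf{T}-\zz)^I\p^J\gamma_i(\zz)=0$; thus each generator of $V_k$ lies in every $\ker(\mathbf{T}-\zz)^I$ with $|I|=k$, i.e. in $H_\zz^k$. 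The content of the lemma is therefore the reverse inclusion $H_\zz^k\subseteq V_k$, which I would prove by induction on $k$. The base case $k=1$ is Proposition \ref{hooo}, since $V_1=\mathrm{span}\{\gamma_i(\zz)\}=\cap_i\ker(T_i-z_i)=H_\zz^1$.

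For the inductive step, assume $H_\zz^k=V_k$ and take $h\in H_\zz^{k+1}$. Commutativity of $\mathbf{T}$ first gives $(T_j-z_j)h\in H_\zz^k$ for every $j$, because for $|I|=k$ we have $(\mathbf{T}-\zz)^I(T_j-z_j)h=(\mathbf{T}-\zz)^{I+e_j}h=0$. Hence $g_j:=(T_j-z_j)h\in H_\zz^k=V_k$, and the tuple $(g_1,\dots,g_m)$ satisfies the compatibility relations $(T_i-z_i)g_j=(T_j-z_j)g_i$, again by commutativity. The step is then reduced to an integration problem: produce $v\in V_{k+1}$ with $(T_j-z_j)v=g_j$ for all $j$. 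Granting such a $v$, the difference $h-v$ lies in $\cap_j\ker(T_j-z_j)=H_\zz^1=V_1\subseteq V_{k+1}$, so $h\in V_{k+1}$ and the induction closes.

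To carry out the integration it is convenient to first record that the jet system $\{\p^J\gamma_i(\zz):|J|\le k-1,\ 1\le i\le n\}$ is linearly independent. This can be seen directly from (\ref{diffe}): in a relation $\sum_{J,i}a_{J,i}\p^J\gamma_i(\zz)=0$, choose a multi-index $J_0$ of maximal length among those carrying a nonzero coefficient and apply $(\mathbf{T}-\zz)^{J_0}$; every term with $J\neq J_0$ is annihilated (a surviving term needs $J\ge J_0$, impossible for $|J|\le|J_0|$ unless $J=J_0$), leaving $J_0!\sum_i a_{J_0,i}\gamma_i(\zz)=0$, whence $a_{J_0,i}=0$ for all $i$ by independence of the frame $\{\gamma_i(\zz)\}$; iterating kills all coefficients. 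With this independence in hand, each $g_j$ has unique coordinates $g_j=\sum_{|J|\le k-1,i}c^{\,j}_{J,i}\p^J\gamma_i(\zz)$, and since by (\ref{diffe}) the operator $(T_j-z_j)$ sends $\p^L\gamma_i(\zz)$ to a nonzero multiple of $\p^{L-e_j}\gamma_i(\zz)$ (and to $0$ when $l_j=0$), solving $(T_j-z_j)v=g_j$ amounts to choosing the coordinates $a_{L,i}$ ($|L|\le k$) of $v$ so that differentiation matches the $c^{\,j}_{J,i}$. The compatibility relations translate, via the now-unique coordinates, into exactly the consistency conditions that make this choice of $a_{L,i}$ well defined across the different $j$, producing the desired $v\in V_{k+1}$.

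The main obstacle is precisely this integration step: it is a discrete Poincar\'e lemma asserting that the first-order data $(g_1,\dots,g_m)$ can be ``anti-differentiated'' within the jet span, and the only reason it does not fail is the compatibility forced by commutativity. The linear independence of the jets is what lets me pass from the vector identities $(T_i-z_i)g_j=(T_j-z_j)g_i$ to genuine equations among coordinates; without it one could not match coefficients, since the generators would admit several representations. Once these two points---independence and compatibility-implies-solvability---are settled, the two set-theoretic inclusions are formal consequences of the differentiation formula, and the induction yields the claimed identity for all $k$.
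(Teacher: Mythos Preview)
Your proof is correct, but the inductive step takes a different route from the paper's. The paper, given $x\in H_\zz^{k+1}$, applies the \emph{full} operators $(\mathbf{T}-\zz)^I$ with $|I|=k$ to read off scalars $a_i^I$ via $(\mathbf{T}-\zz)^Ix=\sum_i a_i^I\gamma_i(\zz)$, and then simply checks using (\ref{diffe}) that $x-\sum_{|J|=k}\sum_i\frac{a_i^J}{J!}\p^J\gamma_i(\zz)\in H_\zz^k$; the correction term is written down explicitly, with no system to solve and no appeal to linear independence of the jets. Your approach instead applies the first-order operators $(T_j-z_j)$, lands in $V_k$ by the inductive hypothesis, and then ``integrates'' back via a discrete Poincar\'e lemma, which forces you to establish independence of $\{\p^J\gamma_i(\zz)\}$ and to verify that the commutativity relations give exactly the consistency needed to define the coefficients $a_{L,i}$ unambiguously. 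Both arguments are sound; the paper's is shorter and more self-contained, while yours yields the linear independence of the jet system as a byproduct (a fact the paper implicitly relies on later anyway when treating $\{\p^K\gamma_i(\zz)\}$ as a basis of $H_\zz^k$). One caution: your integration sketch is accurate but terse---for $L$ with $l_j,l_{j'}\ge1$ the required identity $c^{\,j}_{L-e_j,i}\,l_{j'}=c^{\,j'}_{L-e_{j'},i}\,l_j$ follows by reading off the $\p^{L-e_j-e_{j'}}\gamma_i$-coefficient in $(T_{j'}-z_{j'})g_j=(T_j-z_j)g_{j'}$, and it would be worth stating this explicitly.
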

\begin{proof}That $span_{|I|\leq k-1}\{\p^I\gamma_i(\mathbf{z}), 1\leq i\leq n\}\subseteq \cap_{|I|=k}\ker (\mathbf{T}-\zz)^I$ trivially follows from (\ref{diffe}) and we prove $\cap_{|I|=k}\ker (\mathbf{T}-\zz)^I\subseteq span_{|
I|\leq k-1}\{\p^I\gamma_i(\mathbf{z}), 1\leq i\leq n\}$ by induction.

The conclusion trivially holds when $k=1$ and we suppose it holds for some  $k$.
Now fix
$x\in \cap_{|I|=k+1}\ker (\mathbf{T}-\zz)^I$, then for any $I$ such that $|I|=k$, $(\mathbf{T}-\zz)^Ix$ lies in $\cap_{|I|=1}\ker (\mathbf{T}-\zz)^I$, which is   the joint eigen-space spanned by
 $\{\gamma_1(\mathbf{z}), \cdots,\gamma_n(\mathbf{z})\}$. Hence we  get a collection of complex numbers $\{a^J_i| 1\leq i\leq n, |J|=k\}$  such that
 \begin{equation}\label{09}(\mathbf{T}-\zz)^Ix=\sum_{i=1}^n a^I_i\gamma_i(\mathbf{z})\end{equation} whenever $|I|=k$.

 We claim that the vector $$x-\sum_{|J|=k}\sum_{i=1}^n\frac{a_i^J}{J!}\p^J\gamma_i(\mathbf{z})$$ lies in $\cap_{|I|=k}\ker (\mathbf{T}-\zz)^I$. Then the induction hypothesis   together with the claim implies that
  $x\in span_{|J|\leq k}\{\p^J\gamma_i(\mathbf{z}), 1\leq i\leq n\}$, which gives the conclusion for $k+1$ and   completes the induction.

 To verify the claim, we fix a multi-index $I$ such that $|I|=k$, then   for any   multi-index $J$ with $|J|=k$,  it holds by (\ref{diffe}) that
 $$  (\mathbf{T}-\zz)^I\p^J\gamma_i(\mathbf{z}) =\left\{\begin{array}{l}{I!\gamma_i(\mathbf{z}), I=J} \\ {0, \quad\quad I\neq J}\end{array}\right.$$ This implies
\begin{equation}\label{10}(\mathbf{T}-\zz)^I\sum_{|J|=k}\sum_{i=1}^n\frac{a_i^J}{J!}\p^J\gamma_i(\mathbf{z})=
(\mathbf{T}-\zz)^I \sum_{i=1}^n\frac{a_i^I}{I!}\p^I\gamma_i(\mathbf{z})=\sum_{i=1}^n a^I_i\gamma_i(\mathbf{z}), \end{equation}  and the claim   follows by comparing  (\ref{09})and (\ref{10}).
\end{proof}

%
%
%
%

 \bigskip

We end this section with two    elementary lemmas.

 \begin{lem} \label{norro}  Let $E$ be a holomorphic curve   over $\Omega$ of rank $n$. For any point $\zz_0$ in $\Omega$, there exist a holomorphic frame
 $\{\gamma_i(\zz)\}_{i=1}^n$  for $E$ over an  neighborhood   of $\zz_0$ on which  $\langle \gamma_i(\zz), \gamma_j(\zz_0)\rangle=\delta_{ij}$ for all $1\leq i,j\leq n$, where $\delta_{ij}$ is the Kronecker symbol. \end{lem}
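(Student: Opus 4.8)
The plan is to manufacture the desired normalized frame from an arbitrary holomorphic frame by applying a holomorphic, fibrewise-invertible change of basis. First I would invoke the definition of a holomorphic curve to fix an arbitrary holomorphic frame $\{e_1(\zz),\cdots,e_n(\zz)\}$ for $E$ over some neighborhood of $\zz_0$. After a constant invertible linear transformation (Gram--Schmidt applied to the $n$ fixed vectors $e_1(\zz_0),\cdots,e_n(\zz_0)$) I may assume, without loss of generality, that $\{e_1(\zz_0),\cdots,e_n(\zz_0)\}$ is orthonormal; this merely replaces the given frame by another holomorphic frame and costs nothing.

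Next I would introduce the $n\times n$ matrix-valued function $A(\zz)=\big(\langle e_i(\zz),e_j(\zz_0)\rangle\big)_{i,j=1}^n$. The structural point is that every entry is holomorphic in $\zz$: the vector $e_j(\zz_0)$ is fixed, so pairing the holomorphic vector $e_i(\zz)$ against it is holomorphic, the conjugation in the inner product landing only on the constant slot. Because the frame was arranged orthonormal at $\zz_0$, we have $A(\zz_0)=I$, so by continuity $A(\zz)$ remains invertible on a (possibly smaller) neighborhood of $\zz_0$, on which $A(\zz)^{-1}$ is again holomorphic.

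Then I would define the candidate frame by $\gamma_i(\zz)=\sum_{k=1}^n \big(A(\zz)^{-1}\big)_{ik}\,e_k(\zz)$. Since $A^{-1}$ is holomorphic and invertible, the $\gamma_i$ are holomorphic and span the same fibre as the $e_k$, so $\{\gamma_i\}$ is indeed a holomorphic frame for $E$. Evaluating at the base point gives $\gamma_j(\zz_0)=\sum_k (A(\zz_0)^{-1})_{jk}e_k(\zz_0)=e_j(\zz_0)$, so the frame at $\zz_0$ is precisely the orthonormal frame $\{e_j(\zz_0)\}$, and the normalization reduces to a one-line matrix identity:
$$\langle \gamma_i(\zz),\gamma_j(\zz_0)\rangle=\langle \gamma_i(\zz),e_j(\zz_0)\rangle=\sum_{k=1}^n \big(A(\zz)^{-1}\big)_{ik}\langle e_k(\zz),e_j(\zz_0)\rangle=\big(A(\zz)^{-1}A(\zz)\big)_{ij}=\delta_{ij}.$$

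I expect no deep obstacle, as the statement is elementary; the only point requiring care is the holomorphy and invertibility of $A(\zz)^{-1}$, which rests on recognizing that the inner product against a fixed vector is genuinely holomorphic (not merely smooth) and on shrinking the neighborhood so that invertibility at $\zz_0$ propagates. An alternative route that skips the preliminary Gram--Schmidt step is to carry the Gram matrix $G=A(\zz_0)$ through the computation; but this forces an awkward factor $\overline{(G^{-1})_{ij}}$ into the final pairing and returns $\delta_{ij}$ only when $G=I$, so orthonormalizing at $\zz_0$ first is the cleaner bookkeeping choice.
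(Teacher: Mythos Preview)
Your argument is correct. The paper does not supply its own proof of this lemma; it merely cites Lemma~2.4 of Cowen--Douglas \cite{CD}, and your construction is exactly the classical one given there: orthonormalize the frame at the base point, form the holomorphic matrix $A(\zz)=\big(\langle e_i(\zz),e_j(\zz_0)\rangle\big)$, invert it near $\zz_0$, and apply $A(\zz)^{-1}$ fibrewise.
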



\begin{lem}\label{zw}Let $f(\zz,\ww)$ be a function on $\Omega\times\Omega$ which is holomorphic in $\zz$ and anti-holomorphic in $\ww$. If $f(\zz,\zz)=0$ for all $\zz\in\Omega$, then $f(\zz,\ww)$ vanishes identically on $ \Omega\times\Omega$.\end{lem}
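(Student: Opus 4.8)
The plan is to expand $f$ in a power series about the diagonal, use the hypothesis to annihilate the diagonal values, and then combine the linear independence of mixed monomials with the identity theorem to obtain global vanishing.

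First I would fix an arbitrary point $\zzz\in\Omega$ and choose a small polydisc $P$ centred at $(\zzz,\zzz)$ with $P\subseteq\Omega\times\Omega$. Since $f$ is holomorphic in $\zz$ and anti-holomorphic in $\ww$, on $P$ it admits a convergent expansion
\[
 f(\zz,\ww)=\sum_{I,J}c_{I,J}\,(\zz-\zzz)^I\,\overline{(\ww-\zzz)}^{\,J},
\]
the sum running over all pairs of multi-indices $I,J$. Substituting $\ww=\zz$ and invoking the hypothesis $f(\zz,\zz)=0$ yields
\[
 \sum_{I,J}c_{I,J}\,(\zz-\zzz)^I\,\overline{(\zz-\zzz)}^{\,J}=0
\]
for all $\zz$ in a neighbourhood of $\zzz$.

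The crux is to deduce $c_{I,J}=0$ for every $I,J$, which amounts to the linear independence of the functions $(\zz-\zzz)^I\,\overline{(\zz-\zzz)}^{\,J}$ on a neighbourhood of $\zzz$. I would establish this by applying the Wirtinger operators $\p^{\alpha}\bar\p^{\,\beta}$ (where $\bar\p_k=\partial/\partial\bar z_k$, and $\zz,\bar\zz$ are treated as independent) to the last identity and evaluating at $\zzz$: every summand then vanishes except the one with $I=\alpha$ and $J=\beta$, leaving $\alpha!\,\beta!\,c_{\alpha,\beta}=0$. Hence all coefficients vanish and $f\equiv 0$ on $P$.

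Finally, to upgrade local to global vanishing, I would set $g(\zz,\eta):=f(\zz,\bar\eta)$, which is holomorphic in the joint variable $(\zz,\eta)$ on the connected open set $\Omega\times\overline{\Omega}$, where $\overline{\Omega}=\{\bar\ww:\ww\in\Omega\}$. Having shown that $g$ vanishes on the nonempty open subset corresponding to $P$, and since $\Omega$ is a domain so that $\Omega\times\overline{\Omega}$ is connected, the identity theorem for holomorphic functions of several variables forces $g\equiv 0$, i.e. $f$ vanishes identically on $\Omega\times\Omega$. The only genuinely delicate point is the linear independence step — equivalently, the fact that the antidiagonal $\{(\zz,\bar\zz)\}$ is a set of uniqueness — whereas the passage to global vanishing is routine once connectedness of $\Omega$ is used.
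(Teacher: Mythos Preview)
Your proof is correct and follows a standard route. The paper itself does not supply a proof of this lemma; immediately after stating it the author simply remarks that the result ``is standard'' and moves on. Your argument---local power-series expansion, annihilation of the coefficients by applying $\p^{\alpha}\bar\p^{\,\beta}$ and evaluating at $\zzz$, then globalisation by passing to the genuinely holomorphic function $g(\zz,\eta)=f(\zz,\bar\eta)$ on the connected set $\Omega\times\{\bar\ww:\ww\in\Omega\}$ and invoking the identity theorem---is precisely the kind of proof one has in mind when calling such a lemma standard, so there is nothing to compare against.
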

Lemma \ref{norro}(see Lemma 2.4, \cite{CD}) asserts that a holomorphic curve   always admits   a   holomorphic frame   normalized at a single point which is called a \emph{normalized frame}, and Lemma \ref{zw} is standard   which will be useful in dealing with normalized frames     later.

\section{Main result }

 Throughout this section, $k$ will be a fixed positive integer.  We show that for an operator tuple  $\mathbf{T}=(T_1,\cdots, T_m) \in\bnn$,  unitary equivalence of its $k$-th order  localization $\mathbf{T}_{H_\mathbf{z}^k}$  can be tested  by a collection of ``Specht-type" operators acting  on the first order localization ${H_\mathbf{z}^1}=\cap_{i=1}^m \ker(T_i-{z_i}).$

Precisely, for each  multi-index $I=(i_1,\cdots, i_m)$, and $\mathbf{z}=(z_1,\cdots, z_m)\in\Omega$,   set $$N_\mathbf{z}^I:=(T_1-z_1)^{i_1}(T_2-z_2)^{i_2}\cdots (T_m-z_m)^{i_m}|_{H_\mathbf{z}^k}, $$ and for two fixed  multi-indices $I,J$, set   $$K_\zz^{IJ}:=P_{H_\mathbf{z}^1}[N_\mathbf{z}^I({N_\mathbf{z}^J)^*}]|_{H_\mathbf{z}^1}.$$
Here as $N_\mathbf{z}^I({N_\mathbf{z}^J)^*}$  does not necessarily leave $H_\zz^1$ invariant,   $P_{H_\mathbf{z}^1}$ is imposed to make $K_\zz^{IJ}$ live in $H_\zz^1$.

 The main result of this paper as follows gives the Specht-type classification for $\mathbf{T}_{H_\mathbf{z}^k}$ in terms of $K_\zz^{IJ}$:


\begin{thm}\label{mmain}Given operator tuples $\mathbf{T} $ and $\mathbf{\w T}$ in $\mathcal {B}_n^m(\Omega) $,
their localizations $\mathbf{T}|_{H_\mathbf{z}^k}$ and $\mathbf{\w T}|_{\widetilde{H}_\mathbf{z}^k}$ are unitarily equivalent if and only if operator tuples  $\{K_\zz^{IJ},1\leq |I|,|J|\leq k-1\}$ and $\{\w K_\zz^{IJ},1\leq |I|,|J|\leq k-1\}$  are unitarily equivalent.

\end{thm}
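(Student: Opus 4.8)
The plan is to prove the two implications separately, at a fixed point $\zz$, reducing everything to the Gram data of the jets $\{\p^I\g_i(\zz)\}$ furnished by Lemma \ref{up}.

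The easy direction first. Suppose $W\colon H_\zz^k\to\w H_\zz^k$ is unitary and intertwines each $T_j|_{H_\zz^k}$ with $\w T_j|_{\w H_\zz^k}$. Since $H_\zz^k$ is invariant under each $T_j$ (immediate from (\ref{diffe}) and Lemma \ref{up}, as $(T_j-z_j)$ lowers the order of a jet by one), $W$ intertwines $N_\zz^{e_j}$ with $\w N_\zz^{e_j}$, hence $WN_\zz^I=\w N_\zz^IW$ and, taking adjoints on these finite-dimensional spaces, $W(N_\zz^J)^*=(\w N_\zz^J)^*W$. Consequently $W$ carries $H_\zz^1=\cap_j\ker N_\zz^{e_j}$ onto $\w H_\zz^1$, so $U:=W|_{H_\zz^1}$ is unitary and $WP_{H_\zz^1}=P_{\w H_\zz^1}W$. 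Combining these yields $UK_\zz^{IJ}=\w K_\zz^{IJ}U$ for all $I,J$, which is the asserted equivalence of the two families.

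For the converse, the substantive direction, I would fix normalized frames $\{\g_i\}$, $\{\w\g_i\}$ at $\zz$ (Lemma \ref{norro}), so that $\{\g_i(\zz)\}$, $\{\w\g_i(\zz)\}$ are orthonormal bases of $H_\zz^1$, $\w H_\zz^1$, and the normalization forces $\langle\p^L\g_s(\zz),\g_q(\zz)\rangle=\delta_{L,0}\delta_{sq}$ (differentiate $\langle\g_s(\zz'),\g_q(\zz)\rangle=\delta_{sq}$ and invoke Lemma \ref{zw}). Writing $w_{I,q}:=(N_\zz^I)^*\g_q(\zz)$, a direct computation from (\ref{diffe}) gives the two facts on which the proof turns: in the orthonormal basis $\{\g_q(\zz)\}$ one has $[K_\zz^{IJ}]_{qp}=\langle w_{J,p},w_{I,q}\rangle$, so that (up to indexing) the family $\{K_\zz^{IJ}\}$ is exactly the Gram matrix of $\{w_{I,q}\}$; and $\langle w_{I,q},\p^M\g_s(\zz)\rangle=I!\,\delta_{M,I}\delta_{sq}$, so that $\{w_{I,q}/I!\}$ is dual to the jet family $\{\p^M\g_s(\zz)\}$. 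The normalization also shows that in both $H_\zz^k$ and $\w H_\zz^k$ the level-zero space $H_\zz^1$ splits off orthogonally, so the Gram matrices of the jets and of the $w$'s decouple the $|I|=0$ block (an identity) from the block $1\le|I|\le k-1$. The upshot is that the jet Gram matrix $\mathcal{G}'$ (with entries $\langle\p^I\g_q(\zz),\p^J\g_p(\zz)\rangle$, $1\le|I|,|J|\le k-1$) and the family $\{K_\zz^{IJ}\}$ determine each other: by duality the nontrivial block of $\{K_\zz^{IJ}\}$ equals $D\,(\mathcal{G}')^{-1}D$, where $D$ is the diagonal scaling by the factorials $I!$, acting only on the jet-order index.

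Given the intertwiner $U$ of the $K$-families, I would then observe that $U$ acts only on the frame index $q$, hence commutes with $D$ and, being unitary, with matrix inversion; therefore $U$ transports $\mathcal{G}'$ to $\w{\mathcal{G}}'$ by the very same frame-index conjugation. Finally I define $W$ on $H_\zz^k$ by $W\p^I\g_q(\zz):=\sum_r u_{rq}\,\p^I\w\g_r(\zz)$, with $(u_{rq})$ the matrix of $U$. This $W$ intertwines $\mathbf{T}$ and $\w{\mathbf{T}}$ tautologically, since by (\ref{diffe}) each $N_\zz^{e_l}$ acts on the jet index by the same universal, frame-independent formula; and $W$ is unitary precisely because $U$ carries $\mathcal{G}'$ to $\w{\mathcal{G}}'$. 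The same Gram identity shows $W$ is well defined even when the jets are linearly dependent, as it matches the kernels of the two Gram matrices. The main obstacle I anticipate is exactly this middle step: proving cleanly that the compressed words $\{K_\zz^{IJ}\}$ recover the entire jet Gram matrix $\mathcal{G}'$ rather than partial information. The normalized frame is essential here, since it makes the level-zero block decouple and turns $\{w_{I,q}/I!\}$ into an honest dual basis, so that the passage between $\{K_\zz^{IJ}\}$ and $\mathcal{G}'$ is the single operation ``invert and rescale'', which manifestly commutes with the frame-index unitary $U$; keeping track of the conjugation conventions in $[K_\zz^{IJ}]_{qp}=\langle w_{J,p},w_{I,q}\rangle$ and of the interaction of inversion with the block splitting are the points demanding the most care.
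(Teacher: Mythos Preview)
Your proposal is correct and follows essentially the same route as the paper: work with a frame normalized at $\zz$, identify the matrices of the $K_\zz^{IJ}$ with (rescaled) blocks of the \emph{inverse} of the jet Gram matrix, use the normalization to decouple the $|I|=0$ block, and then pass between block-wise unitary conjugation of the inverse and of the Gram matrix itself. The paper obtains the identification $K_\zz^{IJ}\leftrightarrow I!J!\,G_{JI}$ by an explicit block-matrix multiplication using Lemma~\ref{adj}, whereas your observation that $\{(N_\zz^I)^*\gamma_q(\zz)/I!\}$ is the dual basis to the jets is a cleaner way to see the same thing; and where the paper packages the step ``matching jet Grams up to a frame-index unitary $\Leftrightarrow$ unitary equivalence of localizations'' into Theorem~\ref{po}, you carry out that step (and the easy direction) directly.
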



The first nontrivial case  $k=2$(where one can check that $H_\zz^1$ is   actually invariant under $N_\mathbf{z}^I({N_\mathbf{z}^J)^*}$)  were studied  by   Misra, Pal and Reza  in their works on curvature inequalities for  $\mathcal {B}_n^m(\Omega) $ \cite{MP,MR}. If we assume further that $n=1$, then  $K_\zz^{IJ}$ as  a scaler    exactly  equals the trace of  $N_\mathbf{z}^I({N_\mathbf{z}^J)^*}$  so  it is eligible to call   $K_\zz^{IJ}$ a Specht-type invariant.

 \bigskip

With Theorem \ref{mmain}, we immediately get the following refinement of Theorem \ref{cdmain}
    \begin{thm}\label{refinement}Operator tuples $\mathbf{T} $ and $\mathbf{\w T}$ in $\mathcal {B}_n^m(\Omega) $  are unitarily equivalent if and only if  operator tuples $\{K_\zz^{IJ},1\leq |I|,|J|\leq n \}$ and $\{\w K_\zz^{IJ},1\leq |I|,|J|\leq n \}$  are unitarily equivalent for every $\zz$ in $\Omega$.\end{thm}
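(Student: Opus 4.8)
The plan is to obtain Theorem \ref{refinement} as a direct corollary of the main result Theorem \ref{mmain} together with the Cowen--Douglas theorem (Theorem \ref{cdmain}), by specializing the localization order to the single value $k=n+1$. No new analysis is needed; the entire content has already been absorbed into Theorem \ref{mmain}, and what remains is to chain two ``if and only if'' statements and to check that the index ranges and quantifiers line up.

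First I would invoke Theorem \ref{cdmain}, which asserts that $\mathbf{T}$ and $\mathbf{\w T}$ in $\bnn$ are unitarily equivalent if and only if $\mathbf{T}|_{H_\zz^{n+1}}$ is unitarily equivalent to $\mathbf{\w T}|_{\w H_\zz^{n+1}}$ for every $\zz\in\Omega$. This reduces the global unitary equivalence problem to a family of pointwise problems, one for each $\zz$, concerning the $(n+1)$-th order localizations. Next I would apply Theorem \ref{mmain} with $k=n+1$ at each fixed $\zz$: since then $k-1=n$, that theorem states precisely that $\mathbf{T}|_{H_\zz^{n+1}}$ is unitarily equivalent to $\mathbf{\w T}|_{\w H_\zz^{n+1}}$ if and only if the operator tuples $\{K_\zz^{IJ},\,1\leq|I|,|J|\leq n\}$ and $\{\w K_\zz^{IJ},\,1\leq|I|,|J|\leq n\}$ are unitarily equivalent. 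Composing the two equivalences yields the statement of Theorem \ref{refinement}.

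The only point requiring care is the bookkeeping of indices and quantifiers. The range $1\leq|I|,|J|\leq n$ appearing in Theorem \ref{refinement} is exactly the range $1\leq|I|,|J|\leq k-1$ of Theorem \ref{mmain} under the substitution $k=n+1$, so the two families of testing operators coincide. Moreover the phrase ``for every $\zz$ in $\Omega$'' is common to Theorems \ref{cdmain}, \ref{mmain}, and \ref{refinement}, and in each the unitary equivalences are asserted pointwise; hence no compatibility or uniformity of the implementing unitaries across distinct points $\zz$ is needed, and the chaining is legitimate point by point. I therefore do not expect any genuine obstacle in this deduction: all the substance resides in Theorem \ref{mmain}, and Theorem \ref{refinement} is its specialization to order $n+1$ combined with the reduction furnished by Theorem \ref{cdmain}.
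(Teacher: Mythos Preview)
Your proposal is correct and matches the paper's own treatment: the paper introduces Theorem \ref{refinement} with the sentence ``With Theorem \ref{mmain}, we immediately get the following refinement of Theorem \ref{cdmain},'' i.e., exactly the chaining of Theorem \ref{cdmain} with Theorem \ref{mmain} at $k=n+1$ that you describe. Your bookkeeping of the index range $1\leq |I|,|J|\leq k-1=n$ and the pointwise quantifier over $\zz$ is the only thing to check, and you have it right.
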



The proof of Theorem \ref{mmain} will be given in the end of this section after some preparations. Before proceeding, we fix some notations and  conventions in elementary linear algebra.

(i) \emph{``Inner product" of matrices}: Let $A=[a_{ij}]_{m\times n}$ and $B=[b_{ij}]_{n \times p}$ be two  matrices with entries  $a_{ij}, b_{ij}$ lying in a Hilbert space(whose inner product is denoted by $\langle,\rangle$). Let $\langle A, B\rangle$ denotes the numerical matrix  $E=[e_{ij}]_{m \times p}$ given by $e_{ij}=\sum_{k=1}^n \langle a_{ik},b_{kj}\rangle.$ If $C, D$ are numerical matrices,   then $\langle CA, BD\rangle$=$C\langle A, B\rangle \overline{D}$.

With this notation, if $\bg=(\gamma_1,\cdots,\gamma_n)$ is a holomorphic frame for a rank $n$ holomorphic curve, then   its Gram matrix can be written as $\langle\bg^T(\zz),\bg(\zz)\rangle$. Moreover,  $\bg$ is normalized at a point $\zz_0$ if and only if $\langle\bg^T(\zz),\bg(\zz_0)\rangle=I$ identically.

(ii)\emph{Representation of linear maps}: We adopt the ``left action" convention regarding to representing matrices for linear maps. Precisely, let $\Phi$ be a linear map on a linear space spanned by  $\bg=(\gamma_1,\cdots,\gamma_n)$, then  a matrix  $A=[a_{ij}]$ represents $\Phi$ if  $\Phi\gamma_i=\sum a_{ij}\gamma_j$, or in other words, $\Phi\bg^T=(\Phi\gamma_1,\cdots,\Phi\gamma_n)^T=A(\gamma_1,\cdots,\gamma_n)^T$. If another linear map $\Psi$ is represented by $B=[b_{ij}]$, then $\Phi\Psi$ is represented by $BA$(not $AB$, which corresponds to ``right action" convention).


%
%
%
%
%
%

Throughout this section we will work with normalized frames of $\et$.  In the single variable case $m=1$, the normalized frame was used in the study of geometric theory of  $\mathcal {B}_n^1(\Omega)$ to identify  localization order of  an   operator  with ``contact order" of  a holomorphic curve(see Section 2, \cite{CD} for details), and  here we need the following   variation     for $\bnn$ as a preparation before proving Theorem \ref{mmain}.

\begin{thm}\label{po}The followings are equivalent

 (i)  $H_\mathbf{z}^k$ and $\widetilde{H}_\mathbf{z}^k$ are unitarily equivalent;

 (ii)there exists holomorphic frames $ \mbox{\boldmath$\gamma$}  $ and $ \mbox{\boldmath$\w\gamma$} $(whose Gram matrices are denoted by $H$ and  $\widetilde{H}$)for  $E(\textbf{T})$  and ${E}({\widetilde{\textbf{T}}})$   such that $\p^I\overline{\p}^J H=\p^I\overline{\p}^J \widetilde{H}$  at  $\mathbf{z}$ for all  $|I|,|J|\leq k-1$;

 (iii)there exists holomorphic frames $ \mbox{\boldmath$\gamma$}  $ and $ \mbox{\boldmath$\w\gamma$} $ normalized at $\mathbf{z}$ such that $\p^I\overline{\p}^J H=\p^I\overline{\p}^J \widetilde{H}$  at  $\mathbf{z}$ for all   $|I|,|J|\leq k-1$;

(iv) there exists holomorphic frames $ \mbox{\boldmath$\gamma$}  $ and $ \mbox{\boldmath$\w\gamma$} $ normalized at $\mathbf{z}$ and a constant unitary matrix $U$,  such that $\p^I\overline{\p}^J H=U(\p^I\overline{\p}^J \widetilde{H})U^*$ at  $\mathbf{z}$ for all  $|I|,|J|\leq k-1$;

(v) For any holomorphic frames $ \mbox{\boldmath$\gamma$}  $ and $ \mbox{\boldmath$\w\gamma$} $ normalized at $\mathbf{z}$, there exists a constant unitary matrix $U$,  such that $\p^I\overline{\p}^J H=U(\p^I\overline{\p}^J \widetilde{H})U^*$ at  $\mathbf{z}$ for all   $|I|,|J|\leq k-1$.
 \end{thm}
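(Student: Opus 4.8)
The plan is to read (i) as the assertion that the restricted tuples $\mathbf{T}|_{H_{\zz}^{k}}$ and $\widetilde{\mathbf{T}}|_{\widetilde{H}_{\zz}^{k}}$ are unitarily equivalent. Since $H_{\zz}^{k}$ is invariant under $\mathbf{T}$ and $T_{j}|_{H_{\zz}^{k}}=z_{j}+N_{\zz}^{e_{j}}$ with the $N_{\zz}^{e_{j}}$ commuting nilpotents, this is the same as unitary equivalence of the nilpotent tuples $(N_{\zz}^{e_{1}},\dots,N_{\zz}^{e_{m}})$ and $(\widetilde{N}_{\zz}^{e_{1}},\dots,\widetilde{N}_{\zz}^{e_{m}})$. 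Two observations drive the argument. By Lemma \ref{up}, $H_{\zz}^{k}=\operatorname{span}\{\partial^{I}\gamma_{i}(\zz):|I|\le k-1,\,1\le i\le n\}$, and differentiating $(T_{j}-z_{j})\gamma_{i}=0$ gives the lowering rule $N_{\zz}^{e_{j}}\partial^{I}\gamma_{i}=i_{j}\,\partial^{I-e_{j}}\gamma_{i}$, which is purely combinatorial and hence identical for $\mathbf{T}$ and $\widetilde{\mathbf{T}}$. Moreover, polarizing the Gram matrix yields $(\partial^{I}\overline{\partial}^{J}H)_{ij}|_{\zz}=\langle\partial^{I}\gamma_{i}(\zz),\partial^{J}\gamma_{j}(\zz)\rangle$, so the data appearing in (ii)--(v) is exactly the Gram matrix of the spanning set of $H_{\zz}^{k}$. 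I would prove the cycle (iii)$\Rightarrow$(ii)$\Rightarrow$(i)$\Rightarrow$(iv)$\Rightarrow$(iii) and then (iv)$\Leftrightarrow$(v).

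The formal implications come first. (iii)$\Rightarrow$(ii) and (v)$\Rightarrow$(iv) are immediate, since a normalized frame is a holomorphic frame and normalized frames exist by Lemma \ref{norro}. For (ii)$\Rightarrow$(i), the hypothesis says the two spanning sets have the same Gram matrix, so $\partial^{I}\gamma_{i}\mapsto\partial^{I}\widetilde{\gamma}_{i}$ extends to a well-defined unitary $W:H_{\zz}^{k}\to\widetilde{H}_{\zz}^{k}$ --- both well-definedness and isometry follow from the equality of Gram matrices, even when the spanning vectors are linearly dependent --- and $W$ intertwines the lowering operators because the lowering rule is the same on both sides; thus (i) holds. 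For (iv)$\Rightarrow$(iii) I replace $\{\widetilde{\gamma}_{i}\}$ by $\{U\widetilde{\gamma}_{i}\}$, still normalized, which absorbs the conjugation. Finally (iv)$\Leftrightarrow$(v) rests on the rigidity of normalized frames: two frames of the same bundle normalized at $\zz$ differ by a constant unitary matrix, since writing one as $\Phi$ times the other forces $\Phi(\ww)\Phi(\zz)^{*}\equiv I$ and hence $\Phi$ constant unitary, under which each $\partial^{I}\overline{\partial}^{J}H$ transforms by conjugation; passing between the ``there exist'' and ``for all'' formulations is then bookkeeping.

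The substantial step is (i)$\Rightarrow$(iv), which I expect to be the main obstacle. Fix normalized frames and a unitary $W$ intertwining the two nilpotent tuples. Because $W$ intertwines the $N_{\zz}^{e_{j}}$ it carries the joint kernel $H_{\zz}^{1}=\bigcap_{j}\ker N_{\zz}^{e_{j}}$ onto $\widetilde{H}_{\zz}^{1}$, and since the normalized frames furnish orthonormal bases of these fibers, $W|_{H_{\zz}^{1}}$ is given by a constant unitary matrix $U$. I then claim that $W$ acts by this same $U$ on every grade, namely $W\partial^{I}\gamma_{i}=\sum_{q}U_{qi}\partial^{I}\widetilde{\gamma}_{q}$ for all $|I|\le k-1$, proved by induction on $|I|$. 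Granting the claim at order $|I|-1$, the relations $\widetilde{N}_{\zz}^{e_{p}}(W\partial^{I}\gamma_{i})=W(i_{p}\partial^{I-e_{p}}\gamma_{i})$ determine $W\partial^{I}\gamma_{i}$ modulo $\widetilde{H}_{\zz}^{1}$; the residual ambiguity is removed by the orthogonality $\langle\partial^{I}\gamma_{i}(\zz),\gamma_{j}(\zz)\rangle=0$ for $|I|\ge 1$, obtained by differentiating $\langle\gamma_{i}(\ww),\gamma_{j}(\zz)\rangle=\delta_{ij}$, which forces both $W\partial^{I}\gamma_{i}$ and the candidate $\sum_{q}U_{qi}\partial^{I}\widetilde{\gamma}_{q}$ to be orthogonal to $\widetilde{H}_{\zz}^{1}$, so they coincide. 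With the claim in hand, inserting $W$ into $\langle\partial^{I}\gamma_{i},\partial^{J}\gamma_{j}\rangle$ gives $\partial^{I}\overline{\partial}^{J}H=U(\partial^{I}\overline{\partial}^{J}\widetilde{H})U^{*}$ (the matrix $U$ being the one representing $W|_{H_{\zz}^{1}}$, up to the transpose dictated by the left-action convention), which is precisely (iv). The crux is thus the realization that normalization is exactly the device that kills the kernel ambiguity at each inductive step; once that is seen, the rest of the proof is essentially formal.
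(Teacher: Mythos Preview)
Your proposal is correct and follows essentially the same approach as the paper: the substantive step is the induction showing that the intertwining unitary acts ``diagonally'' on the derivatives of a normalized frame, with the normalized-frame orthogonality $\langle\partial^{I}\gamma_{i}(\zz),\gamma_{j}(\zz)\rangle=0$ killing the joint-kernel ambiguity at each stage, and the remaining implications handled via Lemma~\ref{kk}. The only cosmetic difference is that the paper absorbs $U$ into $\widetilde{\boldsymbol{\gamma}}$ at the outset and thereby proves (i)$\Rightarrow$(iii) directly, whereas you keep $U$ explicit and land at (iv); the arguments are otherwise identical.
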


We need two elementary lemmas before the proof of Theorem \ref{po} and Theorem \ref{mmain}.
\begin{lem}\label{kk}Let $\mbox{\boldmath$\gamma$}=\{\gamma_1,\cdots,\gamma_n\}$ and $\mbox{\boldmath$ \beta$}=\{\beta_1,\cdots,\beta_n\}$ be two holomorphic frames of  a holomorphic curve over $\Omega$  such $\mbox{\boldmath$\gamma$}$ is normalized at a point $\zz_0$. Then  $\mbox{\boldmath$\beta$}$ is normalized at $\zz_0$ if and only if its transition function with $\mbox{\boldmath$\gamma$}$ is a constant unitary matrix.
\end{lem}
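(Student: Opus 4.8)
The plan is to encode the relation between the two frames by their transition function and then compute the ``mixed Gram matrix'' $\langle\bet^T(\zz),\bet(\zzz)\rangle$ directly. Since $\bg=\{\gamma_1,\dots,\gamma_n\}$ and $\bet=\{\beta_1,\dots,\beta_n\}$ are holomorphic frames of the same holomorphic curve, at each point they span the same $n$-dimensional fiber, so there is a holomorphic, pointwise-invertible matrix-valued transition function $g(\zz)$ with $\bet^T(\zz)=g(\zz)\bg^T(\zz)$ (adopting the left-action convention, this is precisely the matrix with $\beta_i=\sum_j g_{ij}\gamma_j$). Taking transposes gives $\bet(\zz)=\bg(\zz)g(\zz)^T$, and in particular $\bet(\zzz)=\bg(\zzz)g(\zzz)^T$.

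The key step is the computation of $\langle\bet^T(\zz),\bet(\zzz)\rangle$. Writing $\bet^T(\zz)=g(\zz)\bg^T(\zz)$ and $\bet(\zzz)=\bg(\zzz)g(\zzz)^T$ and applying the matrix inner-product rule $\langle CA,BD\rangle=C\langle A,B\rangle\overline{D}$ with $C=g(\zz)$ and $D=g(\zzz)^T$, I obtain
$$\langle\bet^T(\zz),\bet(\zzz)\rangle=g(\zz)\,\langle\bg^T(\zz),\bg(\zzz)\rangle\,\overline{g(\zzz)^T}=g(\zz)\,\langle\bg^T(\zz),\bg(\zzz)\rangle\,g(\zzz)^*.$$
Because $\bg$ is normalized at $\zzz$, the middle factor $\langle\bg^T(\zz),\bg(\zzz)\rangle$ equals $I$ identically, and the whole expression collapses to $g(\zz)g(\zzz)^*$. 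Thus $\bet$ is normalized at $\zzz$ precisely when $g(\zz)g(\zzz)^*=I$ for every $\zz\in\Omega$.

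From this identity both implications follow quickly. If $g$ is a constant unitary matrix, then $g(\zz)=g(\zzz)$ and $g(\zzz)g(\zzz)^*=I$, so $\langle\bet^T(\zz),\bet(\zzz)\rangle\equiv I$ and $\bet$ is normalized at $\zzz$. Conversely, if $\bet$ is normalized at $\zzz$, then $g(\zz)g(\zzz)^*=I$ for all $\zz$; since $g(\zzz)^*$ is a fixed invertible matrix, this forces $g(\zz)=\bigl(g(\zzz)^*\bigr)^{-1}$ to be independent of $\zz$, so $g$ is constant, and evaluating the relation at $\zz=\zzz$ gives $gg^*=I$, i.e.\ $g$ is unitary.

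I expect the only real obstacle to be bookkeeping: keeping straight the placement of transposes and conjugates forced by the row-vector frame notation and the left-action convention (it is easy to land on $g(\zzz)^T$ instead of $g(\zzz)^*$). The mathematical content is light --- once the mixed Gram matrix is shown to equal $g(\zz)g(\zzz)^*$, the crucial observation is simply that an identity $g(\zz)g(\zzz)^*\equiv I$ in $\zz$ pins $g(\zz)$ to the constant value $\bigl(g(\zzz)^*\bigr)^{-1}$, so holomorphy of $g$ is not even needed for the rigidity; it is used only to guarantee that the transition function is well defined in the first place.
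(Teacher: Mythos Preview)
Your proof is correct and follows the same underlying idea as the paper: write $\bet^T(\zz)=g(\zz)\bg^T(\zz)$, express the mixed Gram matrix $\langle\bet^T(\zz),\bet(\zzz)\rangle$ in terms of $g$, and use the normalization $\langle\bg^T(\zz),\bg(\zzz)\rangle=I$ to reduce to $g(\zz)g(\zzz)^*=I$.

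The one methodological difference worth noting is that you compute $\langle\bet^T(\zz),\bet(\zzz)\rangle$ directly from the rule $\langle CA,BD\rangle=C\langle A,B\rangle\overline D$, whereas the paper first writes down the \emph{diagonal} identity $\langle\bet^T(\zz),\bet(\zz)\rangle=U(\zz)\langle\bg^T(\zz),\bg(\zz)\rangle U(\zz)^*$ and then invokes Lemma~\ref{zw} (the polarization lemma for functions holomorphic in $\zz$ and anti-holomorphic in $\ww$) to pass to $\langle\bet^T(\zz),\bet(\ww)\rangle$ before specializing $\ww=\zzz$. Your route is slightly more elementary, since the polarization step is not actually needed here: the off-diagonal identity already follows by the same direct matrix computation. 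The paper's route, on the other hand, illustrates how Lemma~\ref{zw} is used in this setting, which is consistent with how that lemma is deployed elsewhere. Either way the conclusion $g(\zz)=\bigl(g(\zzz)^*\bigr)^{-1}$, hence $g$ constant unitary, is reached identically.
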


\begin{proof}
For one direction, let $U$ be a constant unitary matrix and $\mbox{\boldmath$\beta$}^T=U\mbox{\boldmath$\gamma$}^T, $ then for $\zz$ in $\Omega$, $$\langle \mbox{\boldmath$\beta$}^T(\zz),\mbox{\boldmath$\beta$(\zz)}\rangle=U\langle \mbox{\boldmath$\gamma$}^T(\zz),\mbox{\boldmath$\gamma$}(\zz)\rangle U^*, $$ where $U^*$ denotes the conjugate transpose of $U$. The above identity  can be refined by Lemma \ref{zw} into
$$\langle \mbox{\boldmath$\beta$}^T(\zz),\mbox{\boldmath$\beta$(\ww )}\rangle=U\langle \mbox{\boldmath$\gamma$}^T(\zz),\mbox{\boldmath$\gamma$}(\ww)\rangle U^*$$ for all  $\zz,\ww\in \Omega$.
As  $\mbox{\boldmath$\gamma$}$ is normalized at $\zz_0$, $\langle \mbox{\boldmath$\gamma$}^T(\zz),\mbox{\boldmath$\gamma$}(\zz_0)\rangle=I$,  hence  by setting $\ww=\zz_0$  the above equation   becomes

$$\langle \mbox{\boldmath$\beta$}^T(\zz),\mbox{\boldmath$\beta$ }(\zzz )\rangle=U\langle \mbox{\boldmath$\gamma$}^T(\zz),\mbox{\boldmath$\gamma$}(\zzz)\rangle U^*=UU^*=I.$$ Hence $\mbox{\boldmath$\beta$}$ is also normalized at $\zzz$.

Conversely, let $U(\zz)$ be the transition   function  of $\mbox{\boldmath$\beta$}$ with  $\mbox{\boldmath$\gamma$}$, then $U(\zz)$ is holomorphic  and   $$\langle \mbox{\boldmath$\beta$}^T(\zz),\mbox{\boldmath$\beta$(\zz)}\rangle=U(\zz)\langle \mbox{\boldmath$\gamma$}^T(\zz),\mbox{\boldmath$\gamma$}(\zz)\rangle U(\zz)^*,$$
which can be refined into
 $$\langle \mbox{\boldmath$\beta$}^T(\zz),\mbox{\boldmath$\beta$(\ww)}\rangle=U(\zz)\langle \mbox{\boldmath$\gamma$}^T(\zz),\mbox{\boldmath$\gamma$}(\ww)\rangle U(\ww)^*.$$
 If $\mbox{\boldmath$\beta$}$ is also  normalized at $\zzz$, then $\langle \mbox{\boldmath$\beta$}^T(\zz),\mbox{\boldmath$\beta$ }(\zzz )\rangle=\langle \mbox{\boldmath$ \gamma$}^T(\zz),\mbox{\boldmath$ \gamma$ }(\zzz )\rangle=I$ and the above equation becomes(by setting $\ww=\zz_0$)
  $$I=\langle \mbox{\boldmath$\beta$}^T(\zz),\mbox{\boldmath$\beta$}(\zzz)\rangle=U(\zz)\langle \mbox{\boldmath$\gamma$}^T(\zz),\mbox{\boldmath$\gamma$}(\zzz)\rangle U(\zzz)^*=U(\zz)U(\zzz)^*.$$
  This gives $U(\zz)=U^{-1}(\zzz)^*$, so $U(\zz)$ is a constant unitary matrix.
\end{proof}

The  following  lemma on elementary linear algebra is standard and we omit the proof.
\begin{lem}\label{adj}Let $\Phi$ be a linear operator on a finite dimensional Hilbert space and   $\bg=\{\gamma_1,\cdots,\gamma_n\}$ be a base whose Gram matrix is $H$. If $\Phi$ is   represented by a matrix $A$ with respect to $\bg$,  then its adjoint operator    is represented by $HA^*H^{-1}$.
\end{lem}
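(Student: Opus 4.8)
The plan is to compute the representing matrix of $\Phi^*$ directly from the defining relation $\langle \Phi x, y\rangle = \langle x, \Phi^* y\rangle$, expressed on the basis $\bg$, and then to read off the answer using that the Gram matrix $H$ is Hermitian and invertible. First I would name the unknown: let $B=[b_{ij}]$ be the matrix representing $\Phi^*$ in the paper's left-action convention, so that $\Phi^*\gamma_i = \sum_j b_{ij}\gamma_j$. The goal is then to show $B = HA^*H^{-1}$.

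The key step is to evaluate the adjoint identity on pairs of basis vectors. Testing $\langle \Phi\gamma_i, \gamma_j\rangle = \langle \gamma_i, \Phi^*\gamma_j\rangle$ and expanding both sides: the left-hand side becomes $\sum_k a_{ik}\langle\gamma_k,\gamma_j\rangle = (AH)_{ij}$, while the right-hand side, after moving the conjugation from the second slot onto the coefficients $b_{jl}$ and using $\overline{\langle\gamma_l,\gamma_i\rangle}=\langle\gamma_i,\gamma_l\rangle = H_{il}$, becomes $\sum_l H_{il}\,\overline{b_{jl}} = (HB^*)_{ij}$. Since $i$ and $j$ are arbitrary, this yields the single matrix identity $AH = HB^*$.

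Finally I would solve for $B$. Because $H$ is the Gram matrix of a genuine inner product, it is Hermitian positive definite, hence invertible with $H^*=H$; from $AH=HB^*$ we get $B^*=H^{-1}AH$, and taking adjoints gives $B = H^*A^*(H^{-1})^* = HA^*H^{-1}$, as claimed. I do not expect a genuine obstacle here: the only point that demands care is the bookkeeping under the left-action convention, in which the representing map is an anti-homomorphism (so that $\Phi\Psi$ corresponds to $BA$ rather than $AB$), together with correctly tracking on which index the complex conjugation from the second slot of the inner product lands. Getting both of these conventions right is precisely what forces the formula to come out as $HA^*H^{-1}$ rather than the more familiar $H^{-1}A^*H$ of the right-action convention.
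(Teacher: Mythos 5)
Your proof is correct and complete; note that the paper itself omits the proof of Lemma \ref{adj}, declaring it standard, so there is no in-paper argument to diverge from. Your computation --- testing $\langle \Phi\gamma_i,\gamma_j\rangle=\langle \gamma_i,\Phi^*\gamma_j\rangle$ on basis pairs to obtain the single identity $AH=HB^*$, then solving for $B$ using that $H$ is Hermitian and invertible --- is the canonical argument, and you correctly handle the two conventions that matter here (the left-action representation $\Phi\gamma_i=\sum_j a_{ij}\gamma_j$ and conjugate-linearity of the inner product in the second slot), which is precisely what produces $HA^*H^{-1}$ rather than the right-action formula $H^{-1}A^*H$, consistent with how the lemma is later applied to represent $N_{\mathbf{z}}^I(N_{\mathbf{z}}^J)^*$.
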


Proof of  Theorem \ref{po}:
\begin{proof}
(ii)$\Rightarrow$(i) Write $ \mbox{\boldmath$\gamma$}=\{ {\gamma}_1,\cdots, \gamma_n\}  $ and $ \mbox{\boldmath$\w\gamma$}=\{ {\w\gamma}_1,\cdots, \w\gamma_n\}  $, hence
by Lemma (\ref{up}),
 $H_z^k=span_{|J|\leq k-1}\{\p^J\gamma_i(\mathbf{z}), 1\leq i\leq n\}$ and  $\widetilde{H}_z^k=span_{|J|\leq k-1}\{\p^J\widetilde{\gamma}_i(\mathbf{z}), 1\leq i\leq n\}$. Let $\Phi$ be the linear map from  $H_\mathbf{z}^k$ to  $\widetilde{H}_\mathbf{z}^k$ defined by $$\Phi  \p^J\gamma_i(\mathbf{z}):= \p^J\widetilde{\gamma}_i(\mathbf{z}),|J|\leq k-1,$$ then $\Phi$ implements a unitary equivalence between  $H_\mathbf{z}^k$ and $\widetilde{H}_\mathbf{z}^k$.

 In fact,  $\Phi$ trivially intertwines $T_l-z_l$ and $\w T_l-z_l$(hence intertwines $T_l$ and $\w T_l$), $1\leq l\leq m$    as their actions on $\p^{I }\mbox{\boldmath$\gamma$}(\zz)$ and  $\p^{I }\mbox{\boldmath$\w\gamma$}(\zz)$    follows the same rule (\ref{diffe}).
 Moreover,       $\p^I\overline{\p}^JH=[\langle\p^I\gamma_i, {\p^J}\gamma_j\rangle]_{1\leq i,j\leq n}$ and $\p^I\overline{\p}^J\w H=[\langle\p^I\w\gamma_i, {\p^J}\w\gamma_j\rangle]_{1\leq i,j\leq n} $ since the frames are holomorphic, hence the condition (ii)      implies
 $$\langle\p^I\gamma_i,\p^J\gamma_j\rangle=\langle\p^I\w\gamma_i,\p^J\w\gamma_j\rangle$$ for every $1\leq i,j\leq n$ and $|I|,|J|\leq k-1$ at $\zz$, so $\Phi$ is isometric as well.

(iii)$\Rightarrow$(ii) Trivial.

(i)$\Rightarrow$(iii) Let $\Phi$ be  a unitary  operator from $H_\mathbf{z}^k$ to  $\widetilde{H}_\mathbf{z}^k$ which implements the unitary equivalence. We   show that  there exists  holomorphic   frames $ \mbox{\boldmath$\gamma$}$ and $ \mbox{\boldmath$\w\gamma$}$  normalized at $\mathbf{z}$ such that

 \begin{equation}\label{pll}\Phi \p^I\gamma_i(\mathbf{z})=\p^I\w\gamma_i(\mathbf{z})\end{equation} for all $|I|\leq k-1, 1\leq i\leq n$, and (iii) will  follow since $\Phi$ is isometric.

We begin with arbitrary  fixed holomorphic frames
  $ \mbox{\boldmath$\gamma$}=\{ {\gamma}_1,\cdots, \gamma_n\}  $ and $ \mbox{\boldmath$\w\gamma$}=\{ {\w\gamma}_1,\cdots, \w\gamma_n\}  $   for $E(\textbf{T})$  and ${E}({\widetilde{\textbf{T}}})$  normalized at $\mathbf{z}$.
 As $\Phi$   intertwines $T_l-z_l$ and $\w T_l-z_l$, it  maps the joint eigen-space of $\textbf{T}$ spanned by $ \mbox{\boldmath$\gamma$}(\zz)   $ to corresponding one of ${\widetilde{\textbf{T}}}$ spanned by $ \mbox{\boldmath$\w\gamma$}(\zz)$,  hence there exists an $n\times n$ matrix $U$ such that
$$ \Phi\mbox{\boldmath$ \gamma$}^T(\zz)=U\mbox{\boldmath$\w\gamma$}^T(\zz),$$ and $U$  is unitary since both $ \mbox{\boldmath$\gamma$} $ and $ \mbox{\boldmath$\w\gamma$}$ are normalized at $\mathbf{z}$. By Lemma \ref{kk}, $U\w\bg^T$ is again a normalized frame at $\mathbf{z}$, so we can replace $ \w\bg^T  $ by $U\w\bg^T$ which  gives (\ref{pll}) in case $|I|=0$.

 Now we check (\ref{pll}) by induction on $|I|$. Suppose (\ref{pll})   holds with all $|I|\leq l$ for some $l$. For any $I$ with $|I|= l+1$, it holds that  \begin{equation}\label{mnb}(\w T_q-z_q)(\Phi \p^I\gamma_i(\mathbf{z})-\p^I\w\gamma_i(\mathbf{z}))=0\end{equation} for every $1\leq q\leq m$.

In fact, by the intertwining property of $\Phi$,
$$(\w T_q-z_q)(\Phi \p^I\gamma_i(\mathbf{z})-\p^I\w\gamma_i(\mathbf{z}))=
\Phi(T_q-z_q)\p^I\gamma_i(\mathbf{z})-(\w T_q-z_q)\p^I\w\gamma_i(\mathbf{z}).$$ Write $I=(i_1,\cdots,i_q,\cdots,i_m\}$, then in case  $i_q=0$, both  $(T_q-z_q)\p^I\gamma_i(\mathbf{z})$ and $(\w T_q-z_q)\p^I\w\gamma_i(\mathbf{z})$ vanishes hence (\ref{mnb}) trivially holds. In case $i_q\geq 1$,  $(T_q-z_q)\p^I\gamma_i(\mathbf{z})=i_q\p^{I'}\gamma_i(\mathbf{z})$ and $(\w T_q-z_q)\p^I\gamma_i(\mathbf{z})=i_q \p^{I'}\w\gamma_i(\mathbf{z})$ where   $I'=(i_1,\cdots,i_{q}-1,\cdots,i_m)$, hence (\ref{mnb}) follows from the induction hypothesis.

Moreover, we observe that  \begin{equation}\label{pzx}\langle\Phi \p^I\gamma_i(\mathbf{z})-\p^I\w\gamma_i(\zz), \w\gamma_j(\mathbf{z})\rangle=0\end{equation} for every $1\leq j\leq n$.

In fact, as $\Phi$ is isometric and the frames are normalized at $\zz$, it holds that
\footnotesize
$$\langle\Phi \p^I\gamma_i(\mathbf{z})-\p^I\w\gamma_i(\zz), \w\gamma_j(\mathbf{z})\rangle=
\langle\Phi \p^I\gamma_i(\mathbf{z}), \Phi\gamma_j(\mathbf{z})\rangle
-\langle \p^I\w\gamma_i(\mathbf{z}), \w\gamma_j(\mathbf{z})\rangle=\langle  \p^I\gamma_i(\mathbf{z}), \gamma_j(\mathbf{z})\rangle
-\langle \p^I\w\gamma_i(\mathbf{z}), \w\gamma_j(\mathbf{z})\rangle=0-0=0$$ \normalsize as desired.

  Now (\ref{mnb}) implies that $\Phi \p^I\gamma_i(\mathbf{z})-\p^I\w\gamma_i(\zz) $    lies in  $\cap_{i=1}^m \ker(\w T_i-{z_i})$ which is spanned by $\{ {\w\gamma}_1(\zz),\cdots, \w\gamma_n(\zz)\}  $, hence (\ref{pzx}) forces $\Phi \p^I\gamma_i(\mathbf{z})-\p^I\w\gamma_i(\zz)=0$, concluding the induction.

(v)$\Rightarrow$(iv) Trivial.

(iv)$\Rightarrow$(iii) Let $ \mbox{\boldmath$\gamma$}  $ , $ \mbox{\boldmath$\w\gamma$} $   and  $U$ be as given by (iv), then   $\mbox{\boldmath$\w\sigma$}^T:=U\mbox{\boldmath$\w\gamma$}^T $ is again a holomorphic frame for  ${E}({\widetilde{\textbf{T}}})$ normalized at $\zz$ by Lemma \ref{kk}. Moreover,
$$\p^I\overline{\p}^J\langle \mbox{\boldmath$\sigma$}^T,\mbox{\boldmath$\sigma$}\rangle=\p^I\overline{\p}^J (U\langle \mbox{\boldmath$\w\gamma$}^T,\mbox{\boldmath$\w\gamma$}\rangle U^*)=U(\p^I\overline{\p}^J  \langle \mbox{\boldmath$\w\gamma$}^T,\mbox{\boldmath$\w\gamma$}\rangle) U^*$$ holds in a neighborhood of $\zz$ which, specifying at $\zz$,  equals $\p^I\overline{\p}^J\langle \mbox{\boldmath$\gamma$}^T,\mbox{\boldmath$\gamma$}\rangle$ by (iv),  so $ \mbox{\boldmath$\gamma$}  $ and   $\mbox{\boldmath$\sigma$}$ meets (iii).

(iii)$\Rightarrow$(v)Fix holomorphic frames $ \mbox{\boldmath$\beta$}  $ and $ \mbox{\boldmath$\w\beta$} $ normalized at $\mathbf{z}$ with properties given by (iii), then for arbitrarily chosen holomorphic frames $ \mbox{\boldmath$\gamma$}  $ and $ \mbox{\boldmath$\w\gamma$} $ normalized at $\mathbf{z}$, their exists, by Lemma \ref{kk}, constant unitary matrices $V$ and $\w V$ such that  $\mbox{\boldmath$\gamma$}^T=V\mbox{\boldmath$\beta$}^T $ and
 $\mbox{\boldmath$\w\gamma$}^T=\w V\mbox{\boldmath$\w\beta$}^T,$  which gives

$$\p^I\overline{\p}^J\langle \mbox{\boldmath$\gamma$}^T,\mbox{\boldmath$\gamma$}\rangle=V(\p^I\overline{\p}^J  \langle \mbox{\boldmath$\beta$}^T,\mbox{\boldmath$\beta$}\rangle) V^*=V(\p^I\overline{\p}^J  \langle \mbox{\boldmath$\w\beta$}^T,\mbox{\boldmath$\w\beta$}\rangle) V^*=V\w V^*(\p^I\overline{\p}^J  \langle \mbox{\boldmath$\w\gamma$}^T,\mbox{\boldmath$\w\gamma$}\rangle) \w VV^*$$ at $\zz$. The proof is completed by taking $U=V\w V^*$. \end{proof}

Finally we give the proof Theorem \ref{mmain}.

\begin{proof}
\bigskip
We fix a holomorphic frame $ \mbox{\boldmath$\gamma$}=\{ {\gamma}_1,\cdots, \gamma_n\}  $ for $E(\mathbf{T})$ normalized at $\zz$ and begin by calculating the matrix representation of $K^{IJ}_\zz$  with respect to the base $\bg(\zz)$ of $H_\zz^1$(and $\w K_\zz^{IJ}$  follows in the same way), which will be read out from the representing matrix for  $N_{\zz}^I{N_{\zz}^J}^*$ with respect to the   base  $\{\p^K\gamma_i(\mathbf{z}), 1\leq i\leq n, |K|\leq k-1\}$ of $H_\mathbf{z}^k$.

 Let  $L$ be the cardinality of the multi-index set  $\{K, |K|\leq k-1\}$, then  $\dim H_\mathbf{z}^k=nL$($L$ can be   worked out  via binomial coefficients but we do not need the precise value). The Gram matrix   for $\{\p^K\gamma_i(\mathbf{z}), 1\leq i\leq n,\quad |K|\leq k-1\}$, denoted by $\mathbf{H}$,  is an $L\times L$ block matrix $[H_{IJ}]_{0\leq |I|,|J|\leq k-1}$ in which each block is an $n\times n$ matrix $H_{IJ} :=[\langle\p^I\gamma_i(\zz),\p^J\gamma_j(\zz)\rangle ]_{1\leq i,j\leq n}$.

In principle, to  precisely locate a particular block $H_{IJ}$ in  $\mathbf{H}$ one need to assign   an ordering   for the multi-indices, that is, a bijection $\sigma$ from the set  $\{K, |K|\leq k-1\}$ to $\{0,1,2, \cdots, L-1\}$.
 From now on we fix   a particular ordering(the lexicographic ordering for instance), then we can write  $$[H_{IJ}]_{0\leq |I|,|J|\leq k-1}=[H_{\sigma I,\sigma J}]_{0\leq\sigma I,\sigma J\leq L-1}=[H_{ij}]_{0\leq i,j\leq L-1},$$ where  the terminology ``$I$-th row/column" makes sense(which refers  to ``$\sigma(I)$-th row/column"), and we can freely use the above three representations   in the sequel  which will not cause confusion. In particular,  we assume that $\sigma (0,\cdots,0)=0$, so the block $H_{00}$ is the Gram matrix of $ \mbox{\boldmath$\gamma$}$.

By (\ref{diffe}), for fixed index $J$,  $N_z^J$ maps $\p^J\bg(\zz)$ to $J!\bg(\zz)$,  while for $K\neq J$, linear representation of $N_z^J\p^K\bg(\zz)$ in terms of $\{\p^K\bg(\mathbf{z}), \quad  |K|\leq k-1\}$  has no $\bg(\zz)$-component. Therefore  $N_\zz^J$ has the following block matrix representation with respect to $\{\p^K\bg, |K|\leq k-1\}$($\bg(\zz)$ appears in the  $0$-th   place when we arrange $\{\p^K\bg(\zz), |K|\leq k-1\}$ into a column).

$$N_z^J=
\left(\begin{array}{cccc}{\mathbf{0}}  & {}  &{} & {} \\ {\vdots} & {}  &  { } & {} \\{J!I_n} & {\mathbf{0}} &  {\cdots} & {\mathbf{0}} \\  {\vdots} &{}  & {  } & {} \\{\mathbf{0}} & {}  &{} & {} \end{array}\right)\begin{array}{l}{} \\ {J-{th }} \\ {}\end{array}
$$Here we have not written out all nonzero blocks in $N_z^J$,  since the  only thing we need later is that the $(J,0)$ block $J!I_n$ is the only nonzero block throughout  the 0-th column and $J$-th row.

As the frame $ \mbox{\boldmath$\gamma$}$ is normalized at $\zz$, it holds that $$H_{I0}=[\langle\p^I\gamma_i(\zz), \gamma_j(\zz)\rangle ]_{1\leq i,j\leq n}=\textbf{0},$$(similarly, $H_{0I}=\textbf{0}$) for all $1\leq |I|\leq k-1$ and $H_{00}=I_n$. Therefore, the block matrix   $\mathbf{H}$ is of the form
$$\left(\begin{array}{cccc}{I_n}  & {\textbf{0}}& {\cdots} & {\textbf{0}} \\{\textbf{0}}&  {H_{11}}  &  {\cdots} & {H_{1,L-1}} \\ {\vdots} & {\vdots}& {} & {\vdots} \\{\textbf{0}}& {H_{L-1,1}} & {\cdots} & {H_{L-1,L-1}} \end{array}\right),$$ which in turn implies that its inverse   $\mathbf{G}=[G_{IJ}]_{0\leq |I|,|J|\leq k-1}=[G_{ij}]_{0\leq i,j\leq L-1}$ is of the same form.


Now suppose $|I|,|J|
\geq 1$, then by Lemma \ref{adj}, $N_\zz^I{N_\zz^J}^*$ can be represented by
\scriptsize

$$
\left(\begin{array}{cccc}{I_{n}}  & {\textbf{0}}& {\cdots} & {\textbf{0}} \\{\textbf{0}}&  {H_{11}}  &  {\cdots} & {H_{1,L-1}} \\ {\vdots} & {\vdots}& {} & {\vdots} \\{\textbf{0}}& {H_{L-1,1}} & {\cdots} & {H_{L-1,L-1}} \end{array}\right)
\bordermatrix{%
       &        &     &  {  {J}-th}     &\cr
  & \mathbf{0}        & \cdots & J!I_n     &\cdots   & \mathbf{0}\cr     &          &       &\mathbf{0}     & \cr
 &     &    &\vdots  &    &\cr
    &          &        &\mathbf{0}     &  &
}\left(\begin{array}{cccc}{I_{n}}  & {\textbf{0}}& {\cdots} & {\textbf{0}} \\{\textbf{0}}&  {G_{11}}  &  {\cdots} & {G_{1,L-1}} \\ {\vdots} & {\vdots}& {} & {\vdots} \\{\textbf{0}}& {G_{L-1,1}} & {\cdots} & {G_{L-1,L-1}} \end{array}\right)
\left(\begin{array}{cccc}{\mathbf{0}}  & {}  &{} & {} \\ {\vdots} & {}  &  { } & {} \\{I!I_n} & {\mathbf{0}} &  {\cdots} & {\mathbf{0}} \\  {\vdots} &{}  & { } & {} \\{\mathbf{0}} & {}  &{} & {} \end{array}\right)\begin{array}{l}{} \\ \scriptsize{  I-{th }} \\ {}\end{array}$$

$$=\bordermatrix{%
       &        &     &  \scriptsize{  {J}-th}     &\cr
  & \mathbf{0}        & \cdots & J!I_n    &\cdots   & \mathbf{0}\cr     &          &       &      & \cr
 &     &    &\cdots  &    &\cr
    &          &        &      &  &
}\left(\begin{array}{cccc}{\mathbf{0}}  & {}  &{} & {} \\ {I!G_{1I}} & {}  &  { } & {} \\{I!G_{2I}} & { } &  {{\vdots}} & { } \\  {\vdots} &{}  & { } & {} \\{I!G_{L-1,I}} & {}  &{ } & {} \end{array}\right)\begin{array}{l}{} \\  { } \\ {}\end{array}=\left(\begin{array}{cccc}{I!J!G_{JI}}  & { }& {\cdots} & { } \\{ }&  { }  &  {} & { } \\ {\vdots} & { }& {} & { } \\{ }& { } & { } & { } \end{array}\right)$$
\normalsize

Since the frame is normalized at $\zz$, the space $H_\zz^1$  spanned by $\bg(\zz)$ is is orthogonal to the space spanned by  $\{\p^K\bg(\mathbf{z}),  1\leq |K|\leq k-1\}$, which implies that  the   block $ I!J!G_{JI}$ appearing at the left upper corner of $N_\zz^I{N_\zz^J}^*$  exactly  represents $P_{H_\mathbf{z}^1}(N_{\zz}^I{N_{\zz}^J}^*)|_{H_\mathbf{z}^1}$ with respect to the base $\bg(\zz)$ of $H_\mathbf{z}^1$. With  similar notations, $I!J!\w G_{IJ}$ represents $\w K_\zz^{IJ}$ with respect to the normalized frame $\w\bg(\zz)$ of ${E}({\widetilde{\textbf{T}}})$.

\bigskip

Now we are prepared to prove the theorem. For sufficiency, let $\Phi$ be a unitary operator from $H_\mathbf{z}^1$ to $\w H_\mathbf{z}^1$ intertwining $K_\zz^{IJ}$ and $\w K_\zz^{IJ}$ whose  representing matrix    with respect to $\bg(\zz)$ and $\w \bg(\zz)$ is denoted by $U$.  Then $U$ is a unitary matrix as both frames are normalized at $\zz$. Moreover, the intertwining property gives
$$I!J!G_{IJ}U=U(I!J!\w G_{IJ})$$ that is \begin{equation}\label{rpr} G_{IJ} =U \w G_{IJ} U^*\end{equation} for all $1\leq |I|,|J|\leq k-1$ at $\zz$.

Observing that at $\zz$, $G_{00}=\w G_{00}=I_n$ and $G_{I0}=\w G_{I0}=\textbf{0}$ whenever $|I|\neq 0$,  identity \ref{rpr} holds for all $0 \leq |I|,|J|\leq k-1$, which  gives

\begin{equation}\label{ad}   [G_{IJ}]_{0 \leq |I|,|J|\leq k-1}=(U\otimes I_L)
[\widetilde{G}_{IJ}]_{0 \leq |I|,|J|\leq k-1}(U^*\otimes I_L). \end{equation} where $U\otimes I_L$ denotes the diagonal block matrix with $U$ lying on all diagonal blocks.  Taking inverse we get
\begin{equation}\label{aw}    [H_{IJ}]_{0 \leq |I|,|J|\leq k-1}=(U\otimes I_L)
[\widetilde{H}_{IJ}]_{0 \leq |I|,|J|\leq k-1}(U^*\otimes I_L) .\end{equation}

Specifying (\ref{aw}) block-wise we see that at $\zz$,
\begin{equation}\label{rr} H_{IJ} =U \w H_{IJ} U^*\end{equation}
holds for all $0 \leq |I|,|J|\leq k-1$.  Recall that $H_{IJ}=\p^I\overline{\p^J}H_{00}$ and $\w H_{IJ}=\p^I\overline{\p^J}\w H_{00}$, the sufficiency follows from combining (\ref{rr}) and Theorem \ref{po}.

Conversely, if $H_\mathbf{z}^k$ and $\widetilde{H}_\mathbf{z}^k$ are unitarily equivalent, then Theorem \ref{po} implies the existence of a constant unitary matrix $U$ such that (\ref{rr}) holds, which in turn gives, by reversing the above arguments, the intertwining property  (\ref{rpr}). So the unitary operator represented by $U$ with respect to $\bg(\zz)$ and $\w\bg(\zz)$ implements the unitary equivalence of   $\{K_\zz^{IJ}, 1\leq|I|,|J|\leq k-1\}$ and $\{\w K_\zz^{IJ}, 1\leq|I|,|J|\leq k-1\}$.
 \end{proof}

Li Chen

School of Mathematics

Shandong University

  Jinan 250100, China

 Email: lchencz@sdu.edu.cn

\end{document}